\newtheorem{thm}{Theorem}[section]
\newtheorem{defi}{Definition}[section]
\newtheorem{lem}{Lemma}[section]
\newcommand{\be}{\begin{equation}}
\newcommand{\ee}{\end{equation}}
\numberwithin{equation}{section}
\newcommand{\bea}{\begin{eqnarray}}
\newcommand{\eea}{\end{eqnarray}}
\newcommand{\beb}{\begin{eqnarray*}}
\newcommand{\eeb}{\end{eqnarray*}}
\begin{document}
\title{Rough convergence of sequences in a $\textit{S}$-metric space}
\author{Rahul Mondal$^{1}$ and Sukila khatun$^{2}$}
\address{$^{1}$Department of Mathematics, Vivekananda Satavarshiki Mahavidyalaya, Manikpara, Jhargram -721513, West Bengal, India.} 
\address{$^{2}$Department of Mathematics, The University of Burdwan,
Golapbag, Burdwan-713104, West Bengal, India.}
\email{$^{1}$imondalrahul@gmail.com}
\email{$^{2}$sukila610@gmail.com}
\begin{abstract}
Phu\cite{PHU} introduced the idea of rough convergence of sequences in a normed linear space. Here using the idea of Phu we have brought the idea of rough convergence of sequences in a $S$-metric space and discussed some of its basic properties.
\end{abstract}
\noindent\footnotetext{$\mathbf{2010}$\hspace{5pt}AMS\; Subject\; Classification: 40A05, 40A99.\\
{Key words and phrases: Rough convergence, rough Cauchy sequence, $S$-metric, Rough limit point.}}
\maketitle
\section{\bf{Introduction}}
\noindent The abstract formulation of the concept of distance between two points of an arbitrary non empty set has been seen in a metric space. It is well-known to all that by generalizing the concept of distance the central concepts of real and complex analysis like  open and closed sets, convergence of a sequence, continuity and uniform continuity etc. has been generalized in a metric space.\\ 
\indent Later many authors have found out many generalizations of the concept of metric space. The idea of a 2-metric has been given by Gahler\cite{Gahler} in 1963 and in 1992 Dhage\cite{Dhage} introduced the idea of $D$-metric as a generalization of ordinary metric. Later in 2006 we have got another generalization of the concept of metric space by Mustafa and Sims\cite{Mustafa} to bring some new ideas in the fixed point theory. Huang and Zhang \cite{HLG} introduced the idea of cone metric spaces in 2007 generalizing the concept of distance  in an ordinary metric space. The idea of a $S$-metric space has been given by S. Sedghi et al in 2012 for generalization of fixed point theorems in this new structure. \\
\indent The idea of rough convergence of sequences has been firstly given by Phu\cite{PHU} in a normed linear space. There he also introduces the idea of rough Cauchy sequences. Many works\cite{AYTER2, PMROUGH1, PMROUGH2} have been done as an extension of these ideas. Banerjee and Mondal \cite{RMROUGH} studied the idea of rough convergence of sequences in a cone metric space. Recently the idea of rough Cauchy sequences has been introduced by Mondal\cite{RM} in a cone metric space. Here we have discussed the idea of rough convergence of sequences in a $S$-mertric space.
\section{\bf{Preliminaries}}\label{preli}
\begin{defi}\cite{PHU}
Let $\{ x_{n} \}$ be a sequence in a normed linear space $(X, \left\| . \right\|)$,  and $r$ be a non negative real number. Then $\{ x_{n} \}$ is said to be $r$-convergent to $x$ if for any $\epsilon >0$, there exists a natural number $k$ such that $\left\|x_{n} - x \right\| < r +\epsilon$ for all $n \geq k$.
\end{defi}
\begin{defi}\cite{smetric}
In a nonempty set $X$ a function $S: X^{3} \longrightarrow [0, \infty)$ is said to be a $S$-metric on $X$ if the following three conditions hold, for every $x,y,z,a \in X$:\\
$(i)$ $S(x,y,z) \geq 0$\\
$(ii)$ $S(x,y,z) = 0$ if and only if $x=y=z$,\\
$(iii)$  $S(x,y,z) \leq S(x,x,a) + S(y,y,a) + S(z,z,a) $\\
Then the pair $(X, S)$ is said to be a $S$-metric space.
\end{defi}
There are many examples of $S$-metric spaces have been thoroughly discussed in \cite{smetric}. There also it has been shown by an example that a $D^{*}$-metric is a $S$-metric but the converse is not true in general.\\
\indent Throughout $(X,S)$ or simply $X$ stands for a $S$-metric space, $\mathbb{R}$ for the set of all real numbers, $\mathbb{N}$ for the set of all natural numbers, sets are always subsets of $X$ unless otherwise stated. We should note that in a $S$-metric space $S(x,x,y)= S(y,y,x)$.
\begin{defi}\cite{smetric}
In a $S$-metric space  $(X,S)$ for $r>0$ and $x \in X$ we define open and closed ball of radius $r$ and center $x$ respectably as follows  :\\ 
$B_{S}(x,r)=\{ y \in X : S(y,y,x)<r  \}$ \\
$B_{S}[x,r]=\{ y \in X : S(y,y,x) \leq r  \}$ 
\end{defi}
There are several examples of open and closed sets in a $S$-metric space have been discussed in \cite{smetric}.
\begin{defi}\cite{smetric}
In a $S$-metric space $(X,S)$ a subset $A$ of $X$ is said to be a open set if for every $x\in A$ there exists a $r>0$  such that $B_{S}(x,r) \subset A$.
\end{defi}
\begin{defi}\cite{smetric}
In a $S$-metric space $(X,S)$ a subset $A$ of $A$ is said to be $S$-bounded if if there exists a $r>0$ such that $S(x,x,y)<r$ for every $x,y \in X$.
\end{defi}
\begin{defi}\cite{smetric}
In a $S$-metric space $(X,S)$ a sequence $\{x_{n} \}$ is said to be converges to $x$ in $X$ if for every $\epsilon > 0$ there exists a natural number $k$ such that $S(x_{n}, x_{n}, x)< \epsilon$ for every $n \geq k$.
\end{defi}
\begin{defi}\cite{smetric}
In a $S$-metric space $(X,S)$ a sequence $\{x_{n} \}$ is said to be a Cauchy sequence in $X$ if for every $\epsilon > 0$ there exists a natural number $k$ such that $S(x_{n}, x_{n}, x_{m})< \epsilon$ for every $n,m \geq k$.
\end{defi}
\begin{defi}\cite{smetric}
A $S$-metric space $(X,S)$ is said to be complete if every Cauchy sequence converges in this space. 
\end{defi}
In a $S$-metric space $(X,S)$ the collection of all open subsets $\tau$ in $X$ forms a topology on $X$ called topology induced by $S$-metric. The open ball $B_{S}(x,r)$ of radios $r$ and centre $x$ is a open set in $X$.
\section{\bf{Rough convergence of sequences in a $S$- metric space}}
\begin{defi}
A sequence $\{ x_{n} \}$ in a $S$-metric space $(X, S)$ is said to be $r$-convergent to $p$ if for every $\epsilon > 0$ there exists a natural number $k$ such that $S(x_{n}, x_{n}, p) <r + \epsilon $ holds for all $n \geq k$.
\end{defi}
\indent Usually we denote it by $x_{n} \stackrel{r}{\longrightarrow} p$ and $r$ is said to be the degree of roughness of rough convergence of $\left\{x_{n} \right\}$. It should be noted that when $r=0$ the rough convergence becomes the classical convergence of sequences in a $S$-metric space. If $\left\{x_{n} \right\}$ is $r$-convergent to $p$, then $p$ is said to be a $r$-limit point of $\left\{x_{n} \right\}$.  For some $r$ as defined above, the set of all $r$-limit points of a sequence $\left\{x_{n} \right\}$ is said to be the $r$-limit set of the sequence $\left\{x_{n} \right\}$ and we will denote it by $LIM^{r}x_{n}$. Therefore we can say $LIM^{r}x_{n}= \left\{x_{0} \in X : x_{n} \stackrel{r}{\longrightarrow} x_{0}\right\}$.We should note that $r$-limit point of a sequence $\left\{x_{n} \right\}$ may not be unique.\\
\indent Let $X=\mathbb{R}$, the set of all real numbers and $S: X^{3} \longrightarrow \mathbb{R}$ be defined by $S(x, y, z)= |x-z| + |y-z|$. It can be easily clarified that $(X, S)$ is a $S$-metric space. Now let us consider a sequence $\{x_{n}\} =\{ (-1)^{n}\frac{1}{2^{n}}\}$ in $X$. Clearly $S(x_{n}, x_{n}, p)= 2|x_{n}- p|$ for some $p \in \mathbb{R}$. \\
\indent If $p > 0$ then by considering $ \epsilon < 2|\frac{1}{4} - p|$ we can easily clarify that there exists infinitely many natural numbers $n$ for which $S(x_{n}, x_{n}, p) < \epsilon $ does not hold. Also if $p < 0$ then choosing $\epsilon < 2|- \frac{1}{2} -p|$ we can find infinitely many natural numbers $n$ for which $S(x_{n}, x_{n}, p) < \epsilon $ does not hold.\\ 
\indent But for $ p > 0$ if we take $r= (2|- \frac{1}{2}- p|) + 1$ then $S(x_{n}, x_{n}, p) <r + \epsilon $ holds for all $n$. For $ p < 0$ if we take $r= (2| \frac{1}{4}- p|) + 1$ then $S(x_{n}, x_{n}, p) <r + \epsilon $ holds for all $n$. \qed 
\newline \\
\indent The following example shows that a sequence is rough convergent in a $S$-metric space may not convergent in that space.\\ 
\noindent \textbf{Example 3.1.} Let us consider the $S$-metric space $(X, S)$ as defined above. Now let us consider the sequence $\{x_{n} \}$ where $x_{n}=(-1)^{n}$ for $n \in \mathbb{N}$. Clearly $\{x_{n} \}$ is not a convergent sequence in $X$. Because if $p \in \mathbb{R}$ then $S(x_{n}, x_{n}, p)=2|x_{n} -p|$. So $S(x_{n}, x_{n}, p)$ equals to either $2|1+p|$ or $2|1 - p|$. Hence if $M= Min\{2|1+p|, 2|1 - p|\}$ then for $\epsilon < M$ then there there exists infinitely many $n$ for which $S(x_{n}, x_{n}, p) < \epsilon $ does not hold and hence $\{x_{n} \}$ is not a convergent sequence in $X$. \\
\indent But if $r= Max\{2|1+p|, 2|1 - p|\}$ then for any $\epsilon  > 0$ we have $S(x_{n}, x_{n}, p) < r+\epsilon $ for all $n$. Hence $\{x_{n} \}$ is a $r$-convergent sequence and $r$-converges to $p$.\qed  \\
\indent We will use the similar kind of concept as was in the case of a metric space for diameter of a set in a $S$-metric space. For a subset $A$ of $X$ the diameter of $A$ is $sup \{ S(x,x,y) : x, y\in A\}$. $A$ is said to be of infinite diameter in the case when supremum is not finite. \\
\indent It has been discussed by Phu\cite{PHU} in a normed linear space that The diameter of a $r$-limit set of a $r$-convergent sequence $\{ x_{n} \}$ in a $S$-metric space $X$ is not greater then $2r$. Here we have found out a similar kind of property in a $S$-metric space as follows.\\ 
\begin{thm}
The diameter of a $r$- limit set of a $r$-convergent sequence $\{ x_{n} \}$ in a $S$-metric space $X$ is not greater then $3r$.
\end{thm}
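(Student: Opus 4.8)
The plan is to bound the distance between any two points of $LIM^{r}x_{n}$ by $2r$, where I take the diameter of a set $A$ to be $\sup\{ S(a,a,b) : a,b \in A \}$, consistent with the notion of $S$-boundedness used above. So I would fix two arbitrary $r$-limit points $p, q \in LIM^{r}x_{n}$ and aim to show $S(p,p,q) \leq 2r$; taking the supremum over all such pairs then gives the diameter bound. The guiding intuition is exactly Phu's normed-space argument, where a point of the sequence is inserted between the two limits and the ordinary triangle inequality closes the gap.

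First I would exploit the definition of rough convergence. Given $\epsilon > 0$, since $x_{n} \stackrel{r}{\longrightarrow} p$ and $x_{n} \stackrel{r}{\longrightarrow} q$, there are natural numbers $k_{1}, k_{2}$ with $S(x_{n},x_{n},p) < r + \epsilon$ for $n \geq k_{1}$ and $S(x_{n},x_{n},q) < r + \epsilon$ for $n \geq k_{2}$; taking $k = \max\{k_{1},k_{2}\}$, both hold simultaneously for every $n \geq k$. The idea is then to insert one such index $x_{n}$ between $p$ and $q$, use the symmetry relation $S(x,x,y) = S(y,y,x)$ recorded in the preliminaries to rewrite $S(p,p,x_{n}) = S(x_{n},x_{n},p)$ and $S(q,q,x_{n}) = S(x_{n},x_{n},q)$, and bound $S(p,p,q)$ by the sum of the two eventual estimates. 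Letting $\epsilon \to 0$ would then give $S(p,p,q) \leq 2r$, and since $p,q$ were arbitrary, the diameter is at most $2r$.

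The main obstacle is the triangle step, i.e. obtaining $S(p,p,q) \leq S(x_{n},x_{n},p) + S(x_{n},x_{n},q)$. Applying the $S$-metric axiom $(iii)$ directly with $a = x_{n}$ yields only $S(p,p,q) \leq 2S(p,p,x_{n}) + S(q,q,x_{n}) = 2S(x_{n},x_{n},p) + S(x_{n},x_{n},q)$, and the factor two forced by $(iii)$ produces the weaker bound $3r$. To reach the sharp constant $2r$ one needs the metric-type refinement $S(x,x,z) \leq S(x,x,y) + S(y,y,z)$ for the induced distance $d_{S}(x,y) := S(x,x,y)$; this does hold for the $S$-metrics arising from an ordinary metric, as in the example $S(x,y,z) = |x-z| + |y-z|$ above, where $S(p,p,q) = 2|p-q| \leq 2|p-x_{n}| + 2|x_{n}-q| = S(x_{n},x_{n},p) + S(x_{n},x_{n},q)$. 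Securing this refined inequality in the generality the theorem claims is the step I expect to carry the real weight of the argument; once it is in hand, the estimate $S(p,p,q) < 2(r+\epsilon)$ together with $\epsilon \to 0$ completes the proof.
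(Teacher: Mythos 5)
Your proposal stalls exactly where you predicted it would, and the gap cannot be filled: the refined inequality $S(x,x,z)\le S(x,x,y)+S(y,y,z)$ that your plan needs fails in general $S$-metric spaces, and with it the theorem itself. Concretely, let $X=\{p,q,m\}$, define the symmetric function $d$ by $d(p,q)=3$, $d(p,m)=d(q,m)=1$, $d(x,x)=0$, and put $S(x,y,z)=d(x,z)+d(y,z)$. Conditions $(i)$ and $(ii)$ are immediate, and $(iii)$ holds because $d$ satisfies the weak inequality $d(x,z)\le 2d(x,a)+d(a,z)$ for all $x,z,a$ (the binding case is $d(p,q)=3\le 2d(p,m)+d(m,q)=3$; all others have slack); summing this estimate for $(x,z,a)$ and $(y,z,a)$ gives $S(x,y,z)\le S(x,x,a)+S(y,y,a)+S(z,z,a)$. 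In this $S$-metric space the constant sequence $x_{n}=m$ satisfies $S(x_{n},x_{n},p)=S(x_{n},x_{n},q)=2$, so for $r=2$ both $p$ and $q$ belong to $LIM^{r}x_{n}$, while $S(p,p,q)=2d(p,q)=6>4=2r$. (Your needed refinement would force $6\le S(p,p,m)+S(m,m,q)=4$.) So the diameter of the $r$-limit set can reach $3r$, the bound $3(r+\epsilon)$ you did obtain from axiom $(iii)$ is sharp, and the claimed bound $2r$ is simply false in this generality; it survives only in special cases such as your example $S(x,y,z)=|x-z|+|y-z|$, where the induced distance $S(x,x,y)$ is (twice) an ordinary metric.

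For comparison, the paper's own proof proceeds exactly as yours up to the triangle step: from two $r$-limit points $y,z$ with $s=S(y,y,z)>2r$ it derives $s<3(r+\epsilon)$ for every $\epsilon>0$. It then sets $\epsilon=\frac{2s}{3}-r$ and asserts this yields $S(y,y,z)<2r$, a contradiction. But for that choice $3(r+\epsilon)=3r+(2s-3r)=2s$, so the conclusion is only $s<2s$ --- no contradiction at all; the claimed $2r$ is an algebraic slip. What the estimate genuinely proves is $\mathrm{diam}(LIM^{r}x_{n})\le 3r$: if $s>3r$, then $\epsilon=\frac{s-3r}{3}>0$ gives $s<3(r+\epsilon)=s$, which is absurd. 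In short, your instinct that the constant in the triangle step ``carries the real weight'' of the argument was correct; that weight cannot be carried, the theorem should read $3r$ in place of $2r$, and your honestly flagged gap is a more accurate account of the situation than the paper's purported proof.
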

\begin{proof}
We shall show that there does not exist elements $y,z \in LIM^{r} x_{n} $, such that $S(y,y,z)>3r$ holds. If possible let there exists elements $y,z \in LIM^{r} x_{n} $ such that $s>3r$ holds where $s= S(y,y,z)$. Let us consider $\epsilon > 0$ be arbitrarily preassigned. So we can find a $k_{1}, k_{2} \in \mathbb{N}$ for this $\epsilon$ such that $S(x_{n}, x_{n}, y)< r+ \epsilon$ for all $n\geq k_{1}$ and $S(x_{n}, x_{n}, z)< r+ \epsilon$ for all $n\geq k_{2}$. If we suppose $k=max\{k_{1},k_{2}\}$ then $S(x_{n}, x_{n}, y)< r+ \epsilon$ and $S(x_{n}, x_{n}, z)< r+ \epsilon$ both hold for all $n \geq k \dots (i)$ . Now for $n \geq k$ we can write $S(y,y, z)\leq S(y,y, x_{n})+S(y,y, x_{n})+ S(z,z, x_{n}) \dots (ii)$. Hence by using $(i)$ on $(ii)$ we can write $S(y,y,z)< 3(r+ \epsilon)$. Now if we consider $\epsilon = (\frac{s}{3}- r)$, where $(\frac{s}{3}- r) > 0$ then we have $S(y,y,z)< s$, which is a contradiction. Hence there can not have elements $y,z \in LIM^{r} x_{n} $, such that $S(y,y,z)>3r$ holds.
\end{proof}
\indent The following property of a $r$-convergent sequence have been discussed by Phu\cite{PHU} in a normed linear space. Here we have discussed it in a $S$-metric space.\\ 
\begin{thm}
A sequence $\{ x_{n} \}$ $r$-converges to $x$ in a $S$-metric space $X$ will imply $B_{S}[x,r]=LIM^{r}x_{n}$, when $\{ x_{n} \}$ converges to $x$.
\end{thm}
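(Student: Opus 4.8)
The plan is to prove the set identity $B_{S}[x,r]=LIM^{r}x_{n}$ by verifying the two inclusions separately. The only tools needed are the symmetry $S(u,u,v)=S(v,v,u)$ recorded in the preliminaries and the defining inequality $S(u,v,w)\le S(u,u,a)+S(v,v,a)+S(w,w,a)$, applied in a carefully chosen orientation. The guiding idea is that in an $S$-metric the triangle-type inequality always produces a coefficient $2$ on one of the three distances, so I will arrange each application so that this doubled distance is $S(x_{n},x_{n},x)$, the quantity that is driven to $0$ by the convergence of $\{x_{n}\}$ to $x$; the term measured against the $r$-limit will then appear with coefficient $1$.

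For $LIM^{r}x_{n}\subseteq B_{S}[x,r]$, I would take $y\in LIM^{r}x_{n}$ and fix $\epsilon>0$. By $r$-convergence to $y$ there is $k_{1}$ with $S(x_{n},x_{n},y)<r+\epsilon$ for $n\ge k_{1}$, and by convergence to $x$ there is $k_{2}$ with $S(x_{n},x_{n},x)<\epsilon$ for $n\ge k_{2}$. Applying the inequality to the triple $(x,x,y)$ with auxiliary point $a=x_{n}$ gives $S(x,x,y)\le 2S(x,x,x_{n})+S(y,y,x_{n})$. For $n\ge\max(k_{1},k_{2})$ the right side is below $2\epsilon+(r+\epsilon)=r+3\epsilon$; since $\epsilon$ is arbitrary, $S(y,y,x)=S(x,x,y)\le r$, i.e. $y\in B_{S}[x,r]$.

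For the reverse inclusion $B_{S}[x,r]\subseteq LIM^{r}x_{n}$, I would take $y$ with $S(y,y,x)\le r$, fix $\epsilon>0$, and choose $k$ with $S(x_{n},x_{n},x)<\epsilon/2$ for $n\ge k$. Applying the inequality now to the triple $(x_{n},x_{n},y)$ with auxiliary point $a=x$ gives $S(x_{n},x_{n},y)\le 2S(x_{n},x_{n},x)+S(y,y,x)<\epsilon+r$ for all $n\ge k$. This is exactly the statement $x_{n}\stackrel{r}{\longrightarrow}y$, so $y\in LIM^{r}x_{n}$.

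The crux is precisely this orientation of the inequality, and it is the step I expect to be the main obstacle. A naive bound on $S(y,y,x)$ taken through $a=x_{n}$ directly puts the factor $2$ on $S(x_{n},x_{n},y)$, which is controlled only by $r+\epsilon$, and therefore yields the weaker radius $2r$ (matching the diameter estimate of the previous theorem but too coarse here). Getting the exact radius $r$ depends on the doubled distance being the vanishing one, and this in turn requires $S(x_{n},x_{n},x)\to 0$; thus the argument uses genuine (ordinary) convergence of $\{x_{n}\}$ to $x$, and one should read the hypothesis accordingly, since if $S(x_{n},x_{n},x)$ were only bounded by $r+\epsilon$ each inclusion would lose an additive $2r$ and the identity would break down.
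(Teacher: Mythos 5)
Your proof is correct and takes essentially the same route as the paper: both inclusions are established by applying the $S$-metric inequality oriented so that the doubled term is $S(x_{n},x_{n},x)$ (using the symmetry $S(u,u,v)=S(v,v,u)$), exactly as in the paper's argument with its $\epsilon/2$ and $\epsilon/3$ splittings. Your closing caveat is also on target: the paper's own proof indeed bounds $S(x_{n},x_{n},x)$ by $\epsilon/2$ and $\epsilon/3$, i.e.\ it silently uses ordinary (classical) convergence of $\{x_{n}\}$ to $x$ rather than the stated $r$-convergence, so your reading of the hypothesis coincides with what the paper actually proves.
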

\begin{proof}
 First suppose that $\{ x_{n} \}$ converges to $x$. Let $y\in B_{S}[x,r]$ and let $\epsilon$ be any arbitrary preassigned positive real number. So there exists a \textit{natural} number $k$ such that $S(x_{n}, x_{n}, x)< \frac{\epsilon}{2}$ for all $n \geq k$ and we also have $S(x,x,y) \leq r$. Hence for $n\geq k$ we have $S(x_{n}, x_{n}, y) \leq S(x_{n}, x_{n}, x) + S(x_{n}, x_{n}, x) + S(y,y,x)$ and hence $S(x_{n}, x_{n}, y) < r+ \epsilon$ holds for every $n\geq k$. Therefore $y\in LIM^{r}x_{n}$ and hence $B_{S}[x,r] \subset LIM^{r}x_{n}$.\\
\indent Now let $z \in LIM^{r}x_{n}$ and $\epsilon$ be any arbitrary preassigned positive real number. Now we can find a $p_{1}, p_{2}\in \mathbb{N}$ such that $S(x_{n}, x_{n}, z)< r+ \frac{\epsilon}{3}$ for all $n \geq p_{1}$ and $S(x_{n}, x_{n}, x)< \frac{\epsilon}{3}$ for all $n\geq p_{2}$. Now if $p= max\{p_{1}, p_{2}\}$ then $S(x_{n}, x_{n}, z)< r+ \frac{\epsilon}{3}$ and $S(x_{n}, x_{n}, x)< \frac{\epsilon}{3}$ holds for every $n\geq p \dots (i)$. Now for $n\geq p$, $S(x,x,z)\leq S(x,x, x_{n})+ S(x,x, x_{n})+ S(z,z, x_{n})= S( x_{n}, x_{n}, x)+ S( x_{n}, x_{n}, x)+ S( x_{n}, x_{n}, z) \dots (ii)$ holds. Hence by using $(i)$ on $(ii)$ we have $S(x,x,z)< r+ \epsilon$ for any $\epsilon > 0$ and hence $S(x,x,z)\leq r$ holds. Therefore $z\in B_{S}[x,r]$.
\end{proof}
\noindent \textbf{Remark :} If $x, y$ be two distinct elements in $X$ and $S(x,x,y) =d$ then we can have two disjoint open balls $B_{S}(x, \frac{d}{3})$ and $B_{S}(y, \frac{d}{3})$. Because if $z \in B_{S}(x, \frac{d}{3}) \cap B_{S}(y, \frac{d}{3})$ then $S(x,x,z) < \frac{d}{3}$ and $S(y,y,z) < \frac{d}{3}$. So $S(x,x,y) \leq S(x,x,z)+ S(x,x,z)+S(y,y,z) < d$. Which is a contradiction and hence every $S$-metric space is Hausdorff. Therefore every cnvergent sequence in a $S$-metric space has a unique limit.\\
\begin{lem}
Every closed set in $(X, S)$ contains all its limit points.
\end{lem}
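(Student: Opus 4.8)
The plan is to prove the contrapositive statement by a standard topological argument resting on the single fact that the complement of a closed set is open. Recall that in $(X,S)$ a set $A$ is \emph{closed} when $X \setminus A$ is open, and that $x$ is a \emph{limit point} of $A$ when every open ball $B_{S}(x,\delta)$ with $\delta > 0$ contains a point of $A$ distinct from $x$. Equivalently, since the topology is the one induced by the $S$-metric through the balls $B_{S}(x,\delta) = \{ y \in X : S(y,y,x) < \delta \}$, this amounts to saying there is a sequence $\{ a_{n} \}$ lying in $A \setminus \{ x \}$ with $S(a_{n}, a_{n}, x) \to 0$.

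First I would fix a closed set $A$ and an arbitrary limit point $x$ of $A$, and argue by contradiction by supposing $x \notin A$. Then $x \in X \setminus A$, and because $A$ is closed the complement $X \setminus A$ is open; applying the definition of an open set to the point $x$ furnishes a radius $r > 0$ with $B_{S}(x, r) \subset X \setminus A$. The key step is to read off from this inclusion that $B_{S}(x, r) \cap A = \emptyset$, which directly denies the defining property of a limit point, namely that $B_{S}(x, r)$ must contain a point of $A$ other than $x$. This contradiction forces $x \in A$, and since $x$ was an arbitrary limit point, $A$ contains all of them.

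I expect the only delicate point to be the bookkeeping between the two equivalent descriptions of a limit point and the asymmetric-looking expression $S(y,y,x)$ defining the balls; one must check that ``$B_{S}(x,r)$ meets $A$ at most at $x$'' is genuinely the negation of the limit-point condition, using that $S(x,x,x) = 0$ so that $x$ itself always lies in every $B_{S}(x,r)$. If instead the sequential formulation is preferred, the same proof goes through verbatim: the ball $B_{S}(x,r) \subset X \setminus A$ would eventually contain all terms of any sequence in $A$ converging to $x$, contradicting the existence of such a sequence in $A \setminus \{x\}$. Either way no genuine obstacle arises beyond unwinding the definitions, since the result is precisely the topological statement that closed sets are stable under passage to limit points.
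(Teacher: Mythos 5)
Your proof is correct, and it is exactly the standard argument the paper has in mind: the paper itself omits the proof as ``straightforward,'' and your contrapositive argument --- a limit point outside the closed set $A$ would lie in the open complement, hence admit a ball $B_{S}(x,r) \subset X \setminus A$, contradicting that every such ball must meet $A \setminus \{x\}$ --- is the natural way to fill that gap, consistent with how the paper uses limit points and open balls in the lemma that follows. Nothing further is needed.
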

Proof is straightforward and so is omitted.
\begin{lem}
In $(X,S)$ a subset $A$ of $X$ is closed if and only if every sequence in $A$ which converges in $X$ converges to a point in $A$.
\end{lem}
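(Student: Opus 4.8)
The plan is to prove the two implications separately, relying throughout on the standard convention (implicit in Definition 2.4) that a subset is \emph{closed} precisely when its complement is open in the topology induced by the $S$-metric.

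First I would treat the forward direction. Suppose $A$ is closed and let $\{x_{n}\}$ be a sequence in $A$ converging to some $x \in X$; the goal is to show $x \in A$. I would argue by contradiction. If $x \notin A$, then $x$ lies in the open set $X \setminus A$, so by Definition 2.4 there is an $r > 0$ with $B_{S}(x,r) \subset X \setminus A$, i.e. no point $y$ satisfying $S(y,y,x) < r$ belongs to $A$. On the other hand, convergence supplies a natural number $k$ with $S(x_{n},x_{n},x) < r$ for all $n \geq k$, so $x_{k} \in B_{S}(x,r) \subset X\setminus A$, contradicting $x_{k} \in A$. Hence $x \in A$. (Alternatively one may note that such a limit $x$ is either a point of $A$ or a limit point of $A$, and then invoke Lemma 3.1 to conclude $x \in A$.)

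Next I would establish the converse by contraposition. Assume $A$ is not closed, so $X\setminus A$ fails to be open; then there exists a point $x \in X\setminus A$ such that \emph{every} open ball centred at $x$ meets $A$. In particular, for each $n \in \mathbb{N}$ the ball $B_{S}\left(x,\tfrac{1}{n}\right)$ contains a point $x_{n} \in A$. This produces a sequence $\{x_{n}\}$ in $A$ with $S(x_{n},x_{n},x) < \tfrac{1}{n}$ for every $n$, whence $S(x_{n},x_{n},x) \to 0$ and so $x_{n}$ converges to $x$ in the sense of Definition 2.6. By hypothesis the limit of this convergent sequence in $A$ must lie in $A$, forcing $x \in A$ and contradicting $x \in X\setminus A$. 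Therefore $A$ is closed.

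The argument is essentially routine; the one step requiring genuine care is the extraction of the convergent sequence in the converse direction. One must correctly negate the openness of $X\setminus A$, producing for each radius $\tfrac{1}{n}$ a \emph{bona fide} point of $A$ inside $B_{S}\left(x,\tfrac{1}{n}\right)$, and then verify that the shrinking radii force $S(x_{n},x_{n},x)\to 0$, which is exactly convergence to $x$ in the $S$-metric. Everything else reduces to the defining inequalities of the open ball and of $S$-metric convergence.
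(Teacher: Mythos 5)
Your proof is correct and follows essentially the same route as the paper's: both arguments hinge on extracting a sequence from the balls $B_{S}\left(x,\tfrac{1}{n}\right)$ around a point witnessing non-closedness, and on the fact that the tail of a convergent sequence enters every ball around its limit. The only difference is stylistic: you work directly from the complement-is-open definition of closedness, whereas the paper phrases both directions in the language of limit points and leans on Lemma 3.1, so your version is, if anything, slightly more self-contained.
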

\begin{proof}
First suppose that every sequence in $A$ which converges in $X$ converges to a point in $A$. Now if possible let $A$ is not closed. Then there exists a limit point $p$ of $A$ in the complement of $A$. So $B_{S}(p, \frac{1}{n}) \cap (A\setminus \{p\}) \neq \phi$ for all $n \in \mathbb{N}$. Now if we consider an element $x_{n}$ from $B_{S}(p, \frac{1}{n}) \cap (A\setminus \{p\}) $  for all $n \in \mathbb{N}$ then $\{ x_{n} \}$ is a sequence in $A$ which converges to a point $p$ in the complement of $A$.\\
\indent Let $M$ be an open set containing $p$ then there exists an open ball $B_{S}(p, g)$ for some $g>0$ such that $p \in B_{S}(p, g) \subset M$ and hence there exists $h \in \mathbb{N}$ such that $p \in B_{S}(p, \frac{1}{h}) \subset B_{S}(p, g)$. Therefore for $n \geq h$ we have $x_{n} \in M$. This is a contradiction to our supposition.\\ 
\indent Conversely let $A$ is a closed set in $X$. Now if we suppose that there exists a sequence $\{x_{n} \}$ in $A$ converges to a point $r$ which is not in $A$. Then for every open ball B centered at $r$ we have $B \cap (A\setminus \{r\}) \neq \phi$, which is a contradiction to the fact that $A$ is closed in $X$. 
\end{proof}
The following result has been discussed by Phu\cite{PHU} in a normed linear space and is very important in this sequel. 
\begin{thm}
Let $\{ x_{n} \}$ be a sequence in $(X,S)$. Then $LIM^{r}x_{n}$ is a closed set for any degree of roughness $r \geq 0$.
\end{thm}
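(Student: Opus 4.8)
The plan is to show that the complement of $LIM^{r}x_{n}$ is open, or equivalently, by the preceding lemma, to show that $LIM^{r}x_{n}$ contains the limit of every convergent sequence drawn from it. The second route fits the machinery already developed, so I would take a sequence $\{y_{k}\}$ lying in $LIM^{r}x_{n}$ with $y_{k} \to y$ in the $S$-metric, and prove that $y \in LIM^{r}x_{n}$, i.e.\ that $\{x_{n}\}$ is $r$-convergent to $y$.

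First I would fix $\epsilon > 0$ and use the convergence $y_{k} \to y$ to select a single index $k_{0}$ with $S(y,y,y_{k_{0}})$ smaller than a prescribed fraction of $\epsilon$. Since $y_{k_{0}} \in LIM^{r}x_{n}$, the sequence $\{x_{n}\}$ is $r$-convergent to $y_{k_{0}}$, so there exists a natural number $N$ such that $S(x_{n}, x_{n}, y_{k_{0}}) < r + \epsilon'$ for all $n \geq N$, where $\epsilon'$ is another chosen fraction of $\epsilon$. The aim is then to bound $S(x_{n}, x_{n}, y)$ by $r + \epsilon$ for all large $n$.

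The key step is the estimate itself, which uses the triangle-type inequality $(iii)$ in the form $S(x_{n}, x_{n}, y) \leq S(x_{n}, x_{n}, y_{k_{0}}) + S(x_{n}, x_{n}, y_{k_{0}}) + S(y,y,y_{k_{0}})$, together with the symmetry relation $S(x,x,z)=S(z,z,x)$ noted in the preliminaries to align the indices. Combining this with the two bounds from the previous paragraph gives $S(x_{n}, x_{n}, y) < 2(r + \epsilon') + S(y,y,y_{k_{0}})$ for $n \geq N$. Here lies the one delicate point: the factor of $2$ multiplying $r$ would overshoot the target $r+\epsilon$, so I must avoid applying axiom $(iii)$ in a way that doubles the rough radius.

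The main obstacle, therefore, is choosing the right grouping in the triangle inequality so that the radius $r$ appears with coefficient one. The remedy is to route the estimate through $y_{k_{0}}$ only for the \emph{vanishing} terms: write $S(x_{n}, x_{n}, y) \leq 2\,S(y,y,y_{k_{0}}) + S(x_{n}, x_{n}, y_{k_{0}})$, so that the single surviving rough term $S(x_{n}, x_{n}, y_{k_{0}}) < r + \epsilon'$ carries the radius once, while $2\,S(y,y,y_{k_{0}})$ is made arbitrarily small by the choice of $k_{0}$. Fixing $\epsilon' = \epsilon/2$ and demanding $S(y,y,y_{k_{0}}) < \epsilon/4$ then yields $S(x_{n}, x_{n}, y) < r + \epsilon$ for all $n \geq N$, establishing $y \in LIM^{r}x_{n}$. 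By the preceding lemma this proves $LIM^{r}x_{n}$ is closed, and I would remark that the degenerate case where $LIM^{r}x_{n}$ is empty is trivially closed.
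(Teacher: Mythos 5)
Your proposal is correct and follows essentially the same route as the paper: both invoke the sequential characterization of closedness, fix a member $y_{k_0}$ (the paper's $y_p$) of the convergent sequence close to its limit, and apply axiom $(iii)$ grouped so that the vanishing term $S(y,y,y_{k_0})$ is the one doubled while the rough term $S(x_n,x_n,y_{k_0})$ carries $r$ exactly once. The only differences are cosmetic (your $\epsilon/2$, $\epsilon/4$ split versus the paper's $\epsilon/3$'s), and your explicit flagging of the bad grouping that would double $r$ is a nice touch the paper leaves implicit.
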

\begin{proof}
We have to only discuss the case when $LIM^{r}x_{n} (\neq \phi )$. Let $ \{y_{n}\}$ be a sequence in $LIM^{r}x_{n}(\neq \phi )$ converges to $y$. We will show that $ y \in LIM^{r}x_{n}$. Let $\epsilon > 0$ be arbitrary positive real number. Now we can find a $k \in \mathbb{N}$ such that \begin{center} $S(y_{n}, y_{n}, y) < \frac{\epsilon}{3}$ and $S(x_{n}, x_{n}, y_{p})< r+ \frac{\epsilon}{3}$ holds for every $n \geq k$.$ \dots (1)$\end{center} where $p$  is a fixed natural number greater then $k$.\\
\indent Now we can write $S(y,y,x_{n}) \leq S(y,y,y_{p})+ S(y,y,y_{p})+ S(x_{n}, x_{n},y_{p})=S(y_{p},y_{p}, y)+ S(y_{p},y_{p}, y)+ S(x_{n}, x_{n},y_{p}) \dots (2)$. Therefore using $(1)$ on $(2)$ for $n\geq k$ we can write $S(y,y,x_{n})=S(x_{n},x_{n}, y)<r+ \epsilon$. Hence $ y \in LIM^{r}x_{n}$ and hence $LIM^{r}x_{n}$ is a closed set in $X$.
\end{proof}
\indent In $(X, S)$ a sequence $\{ x_{n}\}$ is said to be bounded if and only if there exists a $B(>0)\in \mathbb{R}$ such that $S(x_{n}, x_{n}, x_{m}) < B$ for all $m,n \in \mathbb{N}$. The following theorem is a generalization of the classical properly of a sequence that a convergent sequence must be bounded. 
\begin{thm}
Every $r$-convergent sequence in a $S$-metric space is bounded.
\end{thm}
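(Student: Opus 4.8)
The plan is to exploit the defining inequality of $r$-convergence so as to obtain a single uniform bound on $S(x_{n}, x_{n}, p)$ valid for \emph{all} indices $n$, and then to convert this into a bound on $S(x_{n}, x_{n}, x_{m})$ through the $S$-metric triangle inequality $(iii)$, with the $r$-limit $p$ serving as the auxiliary point $a$.

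First I would fix the degree of roughness $r$ together with an $r$-limit $p$ of $\{x_{n}\}$ (which exists since the sequence is assumed $r$-convergent), and apply the definition of $r$-convergence with the particular choice $\epsilon = 1$. This yields a natural number $k$ with $S(x_{n}, x_{n}, p) < r + 1$ for every $n \geq k$. The finitely many initial terms $x_{1}, x_{2}, \dots, x_{k-1}$ produce only finitely many values $S(x_{i}, x_{i}, p)$, so I set
\[
M = \max\{ S(x_{1}, x_{1}, p), \dots, S(x_{k-1}, x_{k-1}, p), \; r + 1 \},
\]
and obtain $S(x_{n}, x_{n}, p) \leq M$ for \emph{every} $n \in \mathbb{N}$: the tail is controlled by $r + 1 \leq M$, and the initial block by the maximum.

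Next I would apply condition $(iii)$ with $x = y = x_{n}$, $z = x_{m}$ and $a = p$, giving
\[
S(x_{n}, x_{n}, x_{m}) \leq S(x_{n}, x_{n}, p) + S(x_{n}, x_{n}, p) + S(x_{m}, x_{m}, p).
\]
Feeding in the uniform bound from the previous step yields $S(x_{n}, x_{n}, x_{m}) \leq 3M$ for all $m, n \in \mathbb{N}$, so that $B = 3M + 1$ is a bound witnessing $S(x_{n}, x_{n}, x_{m}) < B$ throughout, which is exactly the required boundedness.

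The argument is essentially routine. The step most deserving of care — and the closest thing to an obstacle — is the passage from a mere tail estimate to an estimate uniform over all indices, which is precisely why the finitely many initial terms must be absorbed into the maximum $M$. The only other point to watch is the choice $a = p$ in the triangle inequality, which is what forces all three summands to reduce to quantities already controlled by $M$.
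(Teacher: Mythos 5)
Your proof is correct, and it takes a noticeably cleaner route than the paper's. The paper works with the pairwise maximum $L = \max_{1\leq i,j\leq k} S(x_i,x_i,x_j)$ over the initial block and then splits into three cases according to whether $i$ and $j$ lie before or after the threshold $k$, using $x_k$ as an auxiliary anchor point in the mixed cases; this produces chained estimates like $S(x_i,x_i,x_j) < 2L + 3(r+\epsilon)$ and $6(r+\epsilon)+L$, and the final bound is a maximum over all these case-dependent quantities. You instead center everything at the $r$-limit $p$: by absorbing the finitely many initial distances $S(x_i,x_i,p)$ into a single maximum $M$, you get the uniform estimate $S(x_n,x_n,p) \leq M$ valid for \emph{all} $n$, after which one application of the triangle inequality with $a = p$ gives $S(x_n,x_n,x_m) \leq 3M$ with no case analysis at all. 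Both arguments rest on the same two ingredients (a tail estimate from rough convergence plus a maximum over finitely many initial terms), but your decomposition eliminates the case split and the auxiliary anchor $x_k$, yielding a shorter proof with a cleaner constant; the paper's version, by contrast, bounds the initial pairwise distances directly without reference to the limit point, at the price of extra bookkeeping. One stylistic point in your favor: fixing $\epsilon = 1$ at the outset is the right move, since the bound $B$ need not depend on an arbitrary $\epsilon$ (the paper leaves $\epsilon$ floating in its final expression for $B$, which is harmless but untidy).
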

\begin{proof}
Let $\{ x_{n}\}$ be a $r$-convergent sequence in a $s$-metric space $(X, s)$ and $r$-converges to $x$. We will show that $\{ x_{n}\}$ is bounded in $X$. Now for any arbitrary $\epsilon > 0$ we can find a natural number $k$ such that $S(x_{n}, x_{n}, x)< r+ \epsilon$ for all $n \geq k$. Now consider $L = max_{1\leq i,j \leq k} \{ S(x_{i}, x_{i}, x_{j}) \}$.\\  
\indent Now let $i\leq k$ and $j\geq k$ then then $ S(x_{j}, x_{j}, x_{k}) \leq S(x_{j}, x_{j}, x)+ S(x_{j}, x_{j}, x)+ S(x_{k}, x_{k}, x) <3(r+ \epsilon)$. Also $S(x_{i}, x_{i}, x_{j})\leq S(x_{i}, x_{i}, x_{k}) +S(x_{i}, x_{i}, x_{k}) + S(x_{j}, x_{j}, x_{k}) <2L + 3(r+ \epsilon)$. If $i\geq k$ and $j\leq k$ similarly we can show that $S(x_{i}, x_{i}, x_{j})< 6(r+ \epsilon) + L$.\\ 
\indent Now consider the case for $i\geq k$ and $j\geq k$ we have $S(x_{i}, x_{i}, x_{j})\leq S(x_{i}, x_{i}, x) +S(x_{i}, x_{i}, x) + S(x_{j}, x_{j}, x) < 3(r+\epsilon)$.\\
\indent Now if $B> max \{ 3(r+ \epsilon), L, (2L + 3r + 3 \epsilon), (6r + 6 \epsilon +L)\}$ then $S(x_{i}, x_{i}, x_{j})< B$ for all $i,j\in \mathbb{N}$. Therefore $\{ x_{n}\}$ is bounded in $X$  and hence the result follows.
\end{proof}
\indent We know that a bounded sequence in a metric space may not be convergent. But it has been studied in \cite{RMROUGH} that a bounded sequence in a cone metric space is rough convergent for some degree of roughness. In the next theorem we will extend this idea in a $s$-metric space.
\begin {thm}
A bounded sequence in a $S$-metric space is always $r$-convergent for some degree of roughness $r$.
\end{thm}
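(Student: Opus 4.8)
The plan is to read off the conclusion almost directly from the definition of boundedness. Since $\{x_n\}$ is bounded in $(X,S)$, there is a real number $B>0$ with $S(x_n,x_n,x_m)<B$ for all $m,n\in\mathbb{N}$. The key observation is that this uniform control of the pairwise distances between the terms of the sequence is precisely the information needed to witness rough convergence, \emph{provided} the candidate limit is chosen to be one of the terms of the sequence, so that the bound $B$ applies verbatim without invoking the triangle inequality.

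First I would fix an arbitrary index, say $n_{0}=1$, and set $p=x_{1}$. Then for every $n\in\mathbb{N}$ we have $S(x_n,x_n,p)=S(x_n,x_n,x_1)<B$. Next I would take the degree of roughness to be $r=B$. For an arbitrarily preassigned $\epsilon>0$ it suffices to choose $k=1$: for all $n\geq k$ we then obtain $S(x_n,x_n,p)<B=r<r+\epsilon$, which is exactly the defining inequality of $r$-convergence. Hence $x_n\stackrel{r}{\longrightarrow}x_1$, and $\{x_n\}$ is $r$-convergent with degree of roughness $r=B$.

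There is essentially no obstacle in this argument; the only point requiring a little care is that the boundedness hypothesis is phrased in terms of distances between \emph{terms} of the sequence, so one should anchor the rough limit at a term $x_{n_{0}}$ rather than at an arbitrary point of $X$. (If one insisted on using an arbitrary fixed reference point, one would instead apply condition $(iii)$ together with the symmetry $S(x,x,y)=S(y,y,x)$, at the cost of enlarging $r$.) One could additionally remark that the value $r=B$ produced here is generally far from optimal, and that the smallest admissible $r$ is constrained below by the diameter estimate $2r$ established in the first theorem of this section; however, such a refinement is not needed for the statement as given, which only asserts $r$-convergence for \emph{some} degree of roughness.
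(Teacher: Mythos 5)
Your proposal is correct and follows essentially the same route as the paper: both take the bound $B$ from the definition of boundedness, anchor the rough limit at a term of the sequence (you fix $x_{1}$; the paper observes that any $x_{p}$ works), and conclude $r$-convergence with degree of roughness $r=B$. The only cosmetic difference is that your write-up is more explicit about the choice of $k$ and why no triangle inequality is needed.
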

\begin{proof}
Let $\{ x_{n}\}$ be a bounded sequence in a cone metric space $(X, S)$. So there exists a $B>0$ such that $S(x_{i}, x_{i}, x_{j}) < B$ for all $i,j \in \mathbb{N}$. Hence for any $\epsilon >0$ we have $S(x_{i}, x_{i}, x_{j}) < B + \epsilon$ for all $i,j$. Hence $\{ x_{n}\}$ rough converges to $x_{p}$ for every $p\in \mathbb{N}$ for degree of roughness $B$ and hence the result follows.
\end{proof}
\begin {thm}
Let $\{a_{n}\}$ and $\{b_{n}\}$ be two sequences in a $S$-metric space $(X, S)$ with the property that $S(a_{i}, a_{i}, b_{i})\leq \frac{r}{2}$ for all $i \geq k_{1}( \in \mathbb{N})$ and $r> 0$. Then if $\{ a_{n} \}$ converges to a $\xi \in X$ will imply $\{b_{n}\}$ is $r$- convergent to $\xi$.
\end{thm}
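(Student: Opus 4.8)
The plan is to verify the definition of $r$-convergence directly: I will show that for every $\epsilon > 0$ there is a natural number $k$ with $S(b_n, b_n, \xi) < r + \epsilon$ for all $n \geq k$. The entire argument rests on a single application of the $S$-metric inequality $(iii)$ in which the auxiliary point $a$ is taken to be $a_n$, the natural bridge between the two sequences.

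First I would apply condition $(iii)$ to the triple $(b_n, b_n, \xi)$ with $a = a_n$, which yields
\[
S(b_n, b_n, \xi) \leq S(b_n, b_n, a_n) + S(b_n, b_n, a_n) + S(\xi, \xi, a_n).
\]
Next I would invoke the symmetry relation $S(x,x,y) = S(y,y,x)$ recorded in the preliminaries to rewrite $S(b_n, b_n, a_n) = S(a_n, a_n, b_n)$, so that the hypothesis $S(a_i, a_i, b_i) \leq \frac{r}{2}$ (valid for all $i \geq k_1$) bounds each of the first two summands by $\frac{r}{2}$. This collapses the estimate, for all $n \geq k_1$, to
\[
S(b_n, b_n, \xi) \leq r + S(\xi, \xi, a_n).
\]

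It then remains only to control the last term. Since $\{a_n\}$ converges to $\xi$, for the given $\epsilon$ there is a natural number $k_2$ with $S(a_n, a_n, \xi) < \epsilon$ for all $n \geq k_2$; applying symmetry once more gives $S(\xi, \xi, a_n) < \epsilon$. Setting $k = \max(k_1, k_2)$, I would conclude that $S(b_n, b_n, \xi) < r + \epsilon$ for every $n \geq k$, which is precisely the statement that $\{b_n\}$ is $r$-convergent to $\xi$.

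I do not expect a genuine obstacle here; the only real decision is the choice of $a_n$ as the auxiliary point in $(iii)$, after which the symmetry relation and the convergence hypothesis carry the argument. The one point demanding a little care is the bookkeeping of the two thresholds $k_1$ and $k_2$, together with the observation that the factor two standing in front of $S(b_n, b_n, a_n)$ is exactly what converts the $\frac{r}{2}$ of the hypothesis into the $r$ appearing in the definition of rough convergence.
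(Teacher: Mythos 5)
Your proposal is correct and matches the paper's proof essentially line for line: both apply the $S$-metric inequality to $S(b_n,b_n,\xi)$ with $a_n$ as the auxiliary point, use the symmetry $S(b_n,b_n,a_n)=S(a_n,a_n,b_n)\leq \frac{r}{2}$, and take $k=\max(k_1,k_2)$ to conclude $S(b_n,b_n,\xi)<r+\epsilon$. There is nothing to add.
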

\begin{proof}
Let $\epsilon > 0$ be a preassigned real number. Now since $\{a_{n}\}$ converges to $\xi$ corresponding to this $\epsilon > 0$ we can find a natural number $k_{2}$ such that $S(a_{n}, a_{n}, \xi)< \epsilon $ for all $n \geq k_{2}$.\\
\indent Now if we consider $k=max(k_{1}, k_{2})$ then for all $n \geq k$ we have $S(b_{n}, b_{n}, \xi) \leq S(b_{n}, b_{n}, a_{n}) + S(b_{n}, b_{n}, a_{n})+ S(\xi, \xi, a_{n})< \frac{r}{2}+ \frac{r}{2} + \epsilon$. Because $S(b_{n}, b_{n}, a_{n}) = S(a_{n}, a_{n}, b_{n}) \leq \frac{r}{2}$ and $S(\xi, \xi, a_{n})= S(a_{n}, a_{n}, \xi) < \epsilon$ for all $n\geq k$. Hence the result follows.
\end{proof}
\indent Let $\{x_{n}\}$ is a $r$-convergent sequence in a $S$-metric space $(X, S)$ and $r$-converges to $x$. Then it have been already discussed in \cite{RMROUGH, PHU} that if $\{\xi_{n}\}$ be a convergent sequence in $LIM^{r}x_{n}$ and converges to $\xi$ will imply $\xi \in LIM^{r}x_{n}$. We have found out similar kind of property in a $S$-metric space as discussed bellow.
\begin {thm}
Let $\{x_{n}\}$ is a $r$-convergent sequence in a $S$-metric space $(X, S)$ and $r$-converges to $x$. Then if $\{\xi_{n}\}$ be a convergent sequence in $LIM^{r}x_{n}$ and converges to $\xi$ will imply $\{x_{n}\}$ is a $2r$-convergent sequence in $X$ and $2r$-converges to $\xi$.
\end{thm}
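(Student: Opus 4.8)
The plan is to show that $\xi \in LIM^{2r} x_{n}$, that is, that for every $\epsilon > 0$ there is a natural number $k$ with $S(x_{m}, x_{m}, \xi) < 2r + \epsilon$ for all $m \geq k$. The only structural tool available is the $S$-metric triangle inequality $(iii)$, and the natural quantity to feed into it as the anchor point $a$ is a suitably chosen term $\xi_{n_{0}}$ of the approximating sequence $\{\xi_{n}\}$. First I would fix an arbitrary $\epsilon > 0$. Since $\{\xi_{n}\}$ converges to $\xi$ in the ordinary sense of the preceding definition, I would choose a single index $n_{0} \in \mathbb{N}$ large enough that $S(\xi_{n_{0}}, \xi_{n_{0}}, \xi) < \frac{\epsilon}{3}$.

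Next, because $\xi_{n_{0}}$ lies in $LIM^{r} x_{n}$, it is itself an $r$-limit point of $\{x_{n}\}$; applying the definition of $r$-convergence with tolerance $\frac{\epsilon}{3}$, there is a natural number $k$ so that $S(x_{m}, x_{m}, \xi_{n_{0}}) < r + \frac{\epsilon}{3}$ for all $m \geq k$. Now I would invoke condition $(iii)$ on the triple $(x_{m}, x_{m}, \xi)$ with anchor $a = \xi_{n_{0}}$, obtaining
\[
S(x_{m}, x_{m}, \xi) \leq 2\,S(x_{m}, x_{m}, \xi_{n_{0}}) + S(\xi, \xi, \xi_{n_{0}}).
\]
Using the symmetry relation $S(\xi, \xi, \xi_{n_{0}}) = S(\xi_{n_{0}}, \xi_{n_{0}}, \xi)$ together with the two bounds above gives, for every $m \geq k$,
\[
S(x_{m}, x_{m}, \xi) < 2\left(r + \frac{\epsilon}{3}\right) + \frac{\epsilon}{3} = 2r + \epsilon,
\]
which is exactly the estimate defining $2r$-convergence, so $\xi \in LIM^{2r} x_{n}$.

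The step I expect to carry the real content is the application of $(iii)$: because the anchor point $\xi_{n_{0}}$ enters the $S$-metric triangle inequality with coefficient two, the term $S(x_{m}, x_{m}, \xi_{n_{0}})$ is counted twice, and this is precisely what forces the degree of roughness to double from $r$ to $2r$ in the conclusion rather than remaining $r$. The subtle point to handle with care is the order of the quantifiers: the index $n_{0}$ must be fixed once and for all, using the convergence $\xi_{n} \to \xi$, \emph{before} $k$ is chosen, since $\xi_{n_{0}}$ has to serve as a single fixed anchor in the inequality and one cannot let the anchor vary with $m$. Note that the hypothesis that $\{x_{n}\}$ itself $r$-converges to $x$ is used only to guarantee that $LIM^{r} x_{n}$ is nonempty, so that the approximating sequence $\{\xi_{n}\}$ can exist; beyond that the argument relies solely on each $\xi_{n}$ being an $r$-limit point.
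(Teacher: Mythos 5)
Your proof is correct and follows essentially the same route as the paper: bound $S(x_m, x_m, \xi)$ by the $S$-metric triangle inequality with a fixed term of $\{\xi_n\}$ as the anchor, the doubled anchor term producing the $2r$. In fact your version is more careful than the paper's: the paper only establishes $S(x_n, x_n, x) < r + \frac{\epsilon}{3}$ (roughness toward $x$, with threshold $k_2$) and then silently applies that bound to $S(x_n, x_n, \xi_m)$, which is really justified by $\xi_m \in LIM^{r}x_n$ and needs its own threshold; your quantifier ordering --- fix $n_0$ via $\xi_n \to \xi$ first, then choose $k$ from the $r$-convergence of $\{x_n\}$ to $\xi_{n_0}$ --- supplies exactly the justification the paper glosses over. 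Your closing observation that the $r$-convergence to $x$ serves only to make $LIM^{r}x_n$ nonempty is also accurate.
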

\begin{proof}
Let $\epsilon > 0$ be a preassigned real number. Since $\{\xi_{n}\}$ converges to $\xi$ there exists a natural number $k_{1} $ such that $S(\xi_{n}, \xi_{n}, \xi) < \frac{\epsilon}{3}$ for all $n \geq k_{1}$. Also since $\{x_{n}\}$ is a $r$-convergent to $x$ there exists a natural number $k_{2}$ such that $S(x_{n}, x_{n}, x)< r+ \frac{\epsilon}{3}$ for all $n \geq k_{2}$. Let $k$ be the maximum of $k_{1}$ and $k_{2}$ and consider a member $\xi_{m}$ of $\{\xi_{n}\}$ where $m> k$.\\
\indent  Now for all $n\geq k$ we have $S(x_{n}, x_{n}, \xi) \leq S(x_{n}, x_{n}, \xi_{m})+S(x_{n}, x_{n}, \xi_{m})+ S(\xi, \xi, \xi_{m}) =S(x_{n}, x_{n}, \xi_{m})+ S(x_{n}, x_{n}, \xi_{m})+ S(\xi_{m}, \xi_{m}, \xi) <(r+ \frac{\epsilon}{3})+(r+ \frac{\epsilon}{3})+ \frac{\epsilon}{3}= 2r+ \epsilon$. Therefore $S(x_{n}, x_{n}, \xi)< 2r+ \epsilon$ holds for all $n\geq k$ and hence the result follows.
\end{proof}
\vspace{0.2in}
\indent Let $\{x_{n}\}$ be a sequence in a $S$-metric space $(X, S)$ then a $\xi \in X$ is said to be a cluster point of $\{x_{n}\}$ if for every $\epsilon >0$ and every natural number $p$ there exists a natural number $k>p$ such that $s(x_{k}, x_{k}, \xi)< \epsilon$ holds.   The idea of a closed ball has been discussed previously.\\
\indent The following result has been discussed in a cone metric space\cite{RMROUGH} here we have verified the same in a $S$-metric space. It is a characterization of rough convergence in terms of cluster points.
\begin {thm}
Let $\{x_{n}\}$ be a $r$-convergent sequence in a $S$-metric space $(X, S)$. Then for any cluster point $c$ of $\{x_{n}\}$, $LIM^{r}x_{n}  \subset B_{S}[c,r]$ holds.
\end{thm}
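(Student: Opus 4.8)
The plan is to fix an arbitrary point $y \in LIM^{r}x_{n}$ and to show that $S(y,y,c) \leq r$, which is precisely the condition $y \in B_{S}[c,r]$. To this end I would fix an arbitrary $\epsilon > 0$ and extract two estimates at once. Since $y$ is an $r$-limit point of $\{x_{n}\}$, there is a natural number $k_{1}$ with $S(x_{n}, x_{n}, y) < r + \epsilon$ for all $n \geq k_{1}$. Since $c$ is a cluster point of $\{x_{n}\}$, applying the definition to this same $\epsilon$ and to $p = k_{1}$ yields a natural number $k > k_{1}$ with $S(x_{k}, x_{k}, c) < \epsilon$. The value of the cluster-point hypothesis, as opposed to plain convergence, is exactly that it supplies an index $k$ that can be taken as large as we wish, so that both estimates hold simultaneously at the single index $k$.

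The key step is then a deliberate application of the triangle inequality $(iii)$ of the $S$-metric, together with the symmetry relation $S(u,u,v) = S(v,v,u)$ noted in the preliminaries. I would write it as $S(c,c,y) \leq S(c,c,x_{k}) + S(c,c,x_{k}) + S(y,y,x_{k}) = 2\,S(x_{k}, x_{k}, c) + S(x_{k}, x_{k}, y)$, so that the cluster point $c$ occupies the doubled slots while the $r$-limit point $y$ occupies the single slot. Substituting the two bounds gives $S(y,y,c) = S(c,c,y) < 2\epsilon + (r + \epsilon) = r + 3\epsilon$, and since $\epsilon > 0$ was arbitrary we obtain $S(y,y,c) \leq r$, that is, $y \in B_{S}[c,r]$. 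As $y$ was an arbitrary element of $LIM^{r}x_{n}$, this proves $LIM^{r}x_{n} \subset B_{S}[c,r]$.

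The main obstacle, and really the only delicate point, is the orientation of the triangle inequality. If one instead expands $S(y,y,c)$ with $y$ in the doubled position, the term $S(x_{k}, x_{k}, y)$ enters with coefficient $2$ and one is led only to the weaker bound $S(y,y,c) \leq 2r$, which does not establish membership in $B_{S}[c,r]$. The whole argument hinges on assigning the small, cluster-point-controlled quantity $S(x_{k}, x_{k}, c)$ to the doubled slots and the $r$-controlled quantity $S(x_{k}, x_{k}, y)$ to the single slot. Once this choice is made, the rest is the routine passage $S(y,y,c) < r + 3\epsilon$ for every $\epsilon > 0$, hence $S(y,y,c) \leq r$.
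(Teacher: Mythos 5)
Your proposal is correct and follows essentially the same argument as the paper: both expand $S(c,c,y) \leq 2S(c,c,x_{k}) + S(y,y,x_{k})$ with the cluster point in the doubled slots, use the symmetry $S(u,u,v)=S(v,v,u)$, and let $\epsilon \to 0$ to conclude $S(y,y,c) \leq r$. The only difference is cosmetic: the paper splits the tolerance as $\epsilon/3$ to land exactly on $r+\epsilon$, while you accept the bound $r+3\epsilon$, which is equally valid since $\epsilon$ is arbitrary.
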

\begin{proof}
Let $\epsilon$ be a preassigned positive quantity and $x \in LIM^{r}x_{n}$. We can find a natural number $k$ such that $S(x_{n}, x_{n}, x)< \frac{\epsilon}{3} +r$ for all $n\geq k$. Also since $c$ is a cluster point of $\{x_{n}\}$ there exists a natural number $m>k$ such that $S(x_{m}, x_{m}, c)< \frac{\epsilon}{3}$ holds.\\
\indent Now for this natural number $m$ we can write $S(c,c,x) \leq S(c,c,x_{m})+ S(c,c,x_{m})+ S(x,x,x_{m})=S(x_{m},x_{m}, c)+ S(x_{m},x_{m}, c)+ S(x_{m},x_{m}, x) <\frac{\epsilon}{3}+ \frac{\epsilon}{3} +(r+\frac{\epsilon}{3})=r+ \epsilon$. Hence $S(c,c,x)<r+ \epsilon$. Since $\epsilon$ is chosen arbitrarily  $S(c,c,x)\leq r$ and hence $S(x,x,c)\leq r$. Therefore $x \in B_{S}[c,r]$ and the result follows.
\end{proof}
\noindent \textbf{Acknowledgement:}
The authors are grateful to Prof. Amar Kumar Banerjee, Department of Mathematics, University of Burdwan for his advice during the preparation of this paper. The second author is thankful to The University of Burdwan for the grant of Junior Research Fellowship (State Funded) during the preparation of this paper.


\begin{thebibliography}{99}
\bibitem{AYTER1}
S. Aytar, Rough statistical convergence, Numer. Funct. Anal. and Optimiz, 29(3-4) (2008),
291-303.
\bibitem{AYTER2}
S. Aytar, The rough limit set and the core of a real requence, Numer. Funct. Anal. and
Optimiz, 29(3-4) (2008), 283-290.
\bibitem{RMROUGH}
A. K. Banerjee, R. Mondal, Rough convergence of sequences in a cone metric space. J Anal 27, 1179–1188 (2019).
\bibitem{CHI}
Kieu Phuong Chi, Tran Van An; Dugundji’s theorem for cone metric spaces, Applied Mathematics Letters, 24(2011) 387-390.
\bibitem{Dhage}
B. C. Dhage, Generalized metric spaces mappings with fixed point, Bull. Calcutta Math. Soc.
84 (1992), 329–336.
\bibitem{Gahler} 
S. G$\ddot{a}$hler, 2-metrische R$\ddot{a}$ume und iher topoloische Struktur, Math. Nachr. 26 (1963), 115–148.
\bibitem{HLG}
Huang Long-Gung, Zhang X: Cone metric spaces and fixed point theorems of contractive mappings, Journal of Mathematical Analysis and Applications, 332(2) (2007), 1468-1476.
\bibitem{PMROUGH1}
P. Malik, and M. Maity, On rough convergence of double sequence in normed linear spaces, Bull. Allah. Math. Soc. 28(1), 89-99, 2013.
\bibitem{PMROUGH2}
P. Malik, and M. Maity, On rough statistical convergence of double sequences in normed linear spaces, Afr. Mat. 27, 141-148, 2016.
\bibitem{RM}
R. Mondal, Rough Cauchy sequences in a cone metric space, J. Math. Comput. Sci., 12 (2022), Article ID 14.
\bibitem{Mustafa}
Z. Mustafa, B. Sims, A new approach to generalized metric spaces, J. Nonlinear Convex
Anal. 7 (2006), 289–297.
\bibitem{PHU}
H. X. Phu, Rough convergence in normed linear spaces, Numer. Funct. Anal. and Optimiz,
22: (2001), 199-222.
\bibitem{PHU1}
H. X. Phu, Rough convergence in infinite dimensional normed spaces, Numer. Funct. Anal.
and Optimiz, 24: (2003), 285-301.
\bibitem{SMS}
Shaban Sedghi, N. Shobe, Abdelkrim Aliouche; A generalization of fixed point theorems in S-metric spaces, Matematicki Vesnik 64 258-266, 2012. 

 





\bibitem{smetric}
S Sedghi,  N Shobe,  A Aliouche, A generalization of fixed point theorems in $S$-metric spaces, Matematicki Vesnik 64(2012), 258-266. 


\end{thebibliography}
\end{document}